\newtheorem{theorem}{Theorem}[section]
\newtheorem{lemma}[theorem]{Lemma}
\newtheorem*{conjectureB}{Conjecture B}
\newtheorem*{theoremA}{Theorem A}
\newtheorem*{theoremC}{Theorem C}
\newtheorem*{theoremD}{Theorem D}
\theoremstyle{definition}
\newtheorem*{notation}{Notation}
\theoremstyle{remark}
\numberwithin{equation}{section}
\newcommand{\Aut}{{\mathrm {Aut}}}
\newcommand{\Out}{{\mathrm {Out}}}
\newcommand{\Soc}{{\mathrm {Soc}}}
\newcommand{\cl}{{\mathrm {cl}}}
\newcommand{\NN}{{\mathbb N}}
\newcommand{\tb}{\hspace{0.5mm}^{3}\hspace*{-0.2mm}}
\begin{document}
\title[Multiplicities of conjugacy class sizes of finite groups]
{Multiplicities of conjugacy class sizes\\ of finite groups}

\author{Hung Ngoc Nguyen}
\address{Department of Theoretical and Applied Mathematics, The University of Akron, Akron,
Ohio 44253} \email{hungnguyen@uakron.edu}

\subjclass[2010]{Primary 20E45; Secondary 20E32}

\keywords{Finite group, multiplicity, conjugacy class}

\date{\today}

\begin{abstract} It has been proved recently by Moret\'{o}
\cite{Moreto} and Craven \cite{Craven} that the order of a finite
group is bounded in terms of the largest multiplicity of its
irreducible character degrees. A conjugacy class version of this
result was proved for solvable groups by Zaikin-Zapirain
\cite{Jaikin2}. In this note, we prove that if $G$ is a finite
simple group then the order of $G$, denoted by $|G|$, is bounded in
terms of the largest multiplicity of its conjugacy class sizes and
that if the largest multiplicity of conjugacy class sizes of any
quotient of a finite group $G$ is $m$, then $|G|$ is bounded in
terms of $m$.
\end{abstract}

\maketitle


\section{Introduction}

Let $G$ be a finite group. The multiplicity of a (complex) character
degree of $G$ is the number of distinct irreducible characters of
that degree of the group $G$. Moret\'{o} conjectures in
\cite{Moreto} that if the largest multiplicity of a character degree
of $G$ is $m$, then the order of $G$ is $m$-bounded. Here, we say
that $|G|$ is \emph{$m$-bounded} or \emph{bounded in terms of $m$}
if $|G|<f(m)$ for some real-valued function $f$ on $\NN$. In the
same paper, Moret\'{o}, by using the classification of finite simple
groups, proves that the conjecture is true for all finite groups if
it is true for the symmetric groups. The problem for the symmetric
groups is naturally combinatoric and was done by Craven in
\cite{Craven}.

\begin{theoremA}[Craven \cite{Craven}, Moret\'{o} \cite{Moreto}]\label{Moreto} If the largest multiplicity of
irreducible ordinary character degrees of a finite group $G$ is $m$,
then $|G|$ is $m$-bounded.
\end{theoremA}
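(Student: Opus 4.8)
The plan is to follow the two-stage route already indicated above: a reduction, using the classification of finite simple groups, of the general statement to a purely combinatorial statement about symmetric groups, followed by the analysis of the irreducible character degrees of $\Sym_n$.

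\textbf{Stage 1 (soft bounds and reduction to simple groups).} For any normal subgroup $N$ of $G$ one has $\Irr(G/N)\subseteq\Irr(G)$, so the largest degree-multiplicity of $G/N$ is again at most $m$; counting linear characters already gives $|G:G'|\le m$, so the abelianisation is bounded, and a direct analysis of chief factors handles the case in which $G$ has no nonabelian composition factor. The task is therefore to bound the contribution of the nonabelian chief factors. If $N/M\cong S^k$ with $S$ nonabelian simple is such a factor, pick a nontrivial $\theta\in\Irr(S)$ of smallest degree $d$; the characters $\theta_1\otimes\cdots\otimes\theta_k$ of $N/M$ with each $\theta_i$ of degree $d$ have degree $d^k$, and Clifford theory bounds both the degrees and the numbers of the irreducible characters of $G$ lying above their $G$-orbits. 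Letting the $\theta_i$ range and taking account of the conjugation action of $G$ on $\Irr(N/M)$, one produces too many irreducible characters of $G$ of small equal degree unless both $k$ and $|S|$ are $m$-bounded. Invoking the classification, $S$ is then an alternating group, a simple group of Lie type, or one of finitely many sporadic groups, and it suffices to bound $|S|$ in terms of the largest degree-multiplicity of $S$ in each of these cases; a routine companion argument then recovers $|G|$ from $|S|$ when $G$ is almost simple.

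\textbf{Stage 2 (the simple groups).} The sporadic groups form a finite list. For $S$ of Lie type of rank $r$ over $\FQ$, the Deligne--Lusztig parametrisation furnishes, for any fixed bound on the degree, families of irreducible characters of equal degree (for instance semisimple characters attached to semisimple classes of the dual group) whose sizes grow with both $q$ and $r$; hence $q$, $r$, and therefore $|S|$, are $m$-bounded. This leaves $S=\Alt_n$, and since the irreducible characters of $\Alt_n$ are obtained from those of $\Sym_n$ by restriction, it is enough to bound $n$ in terms of the largest degree-multiplicity of $\Sym_n$.

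\textbf{Stage 3 (symmetric groups --- the main obstacle).} The irreducible character degrees of $\Sym_n$ are the numbers $f^\lambda$ of standard Young tableaux of shape $\lambda\vdash n$, given by the hook length formula, and one must show that $\max_d\#\{\lambda\vdash n:f^\lambda=d\}\to\infty$ as $n\to\infty$. This is the genuinely hard step, and it is exactly Craven's contribution \cite{Craven}: the hook length formula makes $f^\lambda$ behave too erratically to exhibit, uniformly in $n$, an explicit large family of equal-degree shapes, so the argument must combine the automatic symmetry $f^\lambda=f^{\lambda'}$ with counting estimates (the identity $\sum_\lambda (f^\lambda)^2=n!$ forces all but a few of the values $f^\lambda$ to be large, so they cannot be spread evenly over $[1,\sqrt{n!}\,]$) and with explicit constructions of many partitions sharing a prescribed multiset of hook lengths, conveniently organised through abacus displays and the $t$-core/$t$-quotient decomposition. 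Stages 1 and 2 are soft by comparison; essentially all of the difficulty, and all of the combinatorics, is concentrated in Stage 3.
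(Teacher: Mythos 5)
Theorem~A is not proved in this paper: it is quoted from Craven and Moret\'{o}, and the introduction only records the division of labour between those two papers (a CFSG reduction of the general problem to the symmetric groups, due to Moret\'{o}, followed by Craven's combinatorial solution of the symmetric group case). Your three-stage outline reproduces exactly that architecture, so in that sense you are following the same route the paper points to; your Stage~2 description of the Lie type case (many equal-degree semisimple characters parametrised by semisimple classes of the dual group) and of the passage from $A_n$ to $S_n$ by restriction also matches how Moret\'{o} actually argues.

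As a proof, however, what you have written has a genuine gap, and you have located it yourself: Stage~3 asserts, but does not establish, that the largest multiplicity of the degrees $f^\lambda$, $\lambda\vdash n$, tends to infinity with $n$. Listing the available tools (the symmetry $f^\lambda=f^{\lambda'}$, the identity $\sum_\lambda(f^\lambda)^2=n!$, abacus displays and $t$-cores/$t$-quotients) is not the same as producing, for every $m$, an explicit $N(m)$ beyond which some degree is attained by more than $m$ partitions; that construction is the entire content of Craven's paper and cannot be compressed into a remark. A smaller but real gap sits in Stage~1: the claim that ``a direct analysis of chief factors handles the case in which $G$ has no nonabelian composition factor'' hides the solvable case, which is itself a nontrivial theorem (compare Theorem~\ref{Jaikin-solvable}, the conjugacy-class analogue, which this paper must import as a black box). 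So your proposal is an accurate map of the published proof rather than a proof; if the intent was only to identify the strategy behind the citation, it succeeds, but neither of the two hard inputs is actually supplied.
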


In a similar way, we define the multiplicity of a conjugacy class
size of $G$ to be the number of distinct conjugacy classes of that
size. Because of the natural duality between the characters and
conjugacy classes, one might expect a conjugacy class version of
Theorem~A:

\begin{conjectureB} If the largest multiplicity of conjugacy class
sizes of a finite group $G$ is $m$, then $|G|$ is $m$-bounded.
\end{conjectureB}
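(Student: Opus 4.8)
The plan is to follow, as closely as the situation allows, the route that proves Theorem~A, while compensating for the fact that conjugacy class sizes do not restrict to subgroups or push forward to quotients nearly as cleanly as character degrees do. Throughout, let $m$ denote the largest multiplicity of a conjugacy class size of $G$, write $k(G)$ for the number of conjugacy classes of $G$, and write $n(G)$ for the number of \emph{distinct} class sizes of $G$. First I would record the two elementary bounds that come for free. Every element of $Z(G)$ gives a conjugacy class of size $1$, so the multiplicity of the class size $1$ is at least $|Z(G)|$ and hence $|Z(G)|\le m$. Next, since $k(G)=\sum_{s}(\text{multiplicity of the size }s)\le m\cdot n(G)$, where $s$ runs over the distinct class sizes, a bound on $n(G)$ in terms of $m$ would bound $k(G)$ in terms of $m$, and then Landau's theorem — only finitely many finite groups have a prescribed number of conjugacy classes — would finish the proof. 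Thus the entire problem reduces to bounding the number of distinct conjugacy class sizes of $G$ in terms of the largest multiplicity.

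To bound $n(G)$ I would bring in the classification of finite simple groups, as Moret\'{o} did for Theorem~A, attempting a three-part program. (i) \emph{Simple groups:} show $|S|$ is $m$-bounded for $S$ simple. For alternating groups this becomes the combinatorial statement that the maximum, over partitions $\lambda$ of $n$, of the number of partitions $\mu$ of $n$ having the same value of $\prod_i i^{a_i}a_i!$ (with $a_i$ the number of parts equal to $i$) grows with $n$ — the class-size analogue of Craven's partition analysis. For groups of Lie type one uses the explicit centralizer orders from Deligne--Lusztig theory: when the defining field or the rank is large, very many semisimple (already the regular semisimple) or unipotent classes share a centralizer order, forcing $m$ to be large; e.g.\ in $\GL_n(q)$ the regular semisimple classes with a fixed multiset of factor-degrees all have the same centralizer order, while their number grows with $q$. (ii) The solvable case is already available from Zaikin-Zapirain~\cite{Jaikin2}. (iii) \emph{Glue the pieces:} take a minimal normal subgroup $N$ of $G$ and analyse, according to whether $N$ is elementary abelian (control the $G$-orbit sizes on $N$) or a power of a nonabelian simple group (use part (i) and the structure of $\Aut(N)$), how the class sizes of $G$, of $N$, and of $G/N$ interact, and then induct.

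The hard part will be step (iii), the gluing. In the character setting the reduction is painless precisely because $\Irr(G/N)\subseteq\Irr(G)$, so multiplicities can only decrease in a quotient; for conjugacy class sizes there is no such monotonicity — a class size of $G$ may split into many class sizes of $G/N$ or collapse to one, and the sizes in $N$, $G$, and $G/N$ are tied together only by inequalities and divisibility — so one cannot simply bootstrap from the simple and solvable cases. I therefore expect that a complete proof of Conjecture~B will require a genuinely new idea: for instance a direct proof that $n(G)$ is $m$-bounded, perhaps via a bound on the largest class size relative to $|G|$, or via the commuting structure of $G$.

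Short of that, I would aim for the two statements announced in the abstract, which sidestep the obstacle. The first is exactly step (i): $|S|$ is $m$-bounded for every finite simple group $S$. The second replaces the hypothesis on $G$ by the stronger hypothesis that \emph{every} quotient of $G$ has largest class-size multiplicity at most $m$; under that hypothesis the inductive passage from $G$ to $G/N$ goes through, since one may freely apply the inductive conclusion to quotients, and the remaining work is to control the contribution of a single minimal normal subgroup $N$ — bounding $|N|$ in terms of $m$ — using the orbit-size analysis on abelian $N$ and part (i) together with $\Aut(N)$ on the nonabelian-socle case.
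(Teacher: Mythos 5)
You should first note that the statement you were asked to prove is labelled Conjecture B in the paper and is \emph{not} proved there: the paper establishes only Theorem C (the conjecture for finite simple groups) and Theorem D (the conclusion under the stronger hypothesis that \emph{every quotient} of $G$ has class-size multiplicities at most $m$). So there is no proof in the paper to match, and your proposal, which openly stops short of a complete argument, is in the same position as the paper itself. Your diagnosis of the obstruction is exactly the right one and is precisely why the paper retreats to Theorem D: since $\Irr(G/N)\subseteq\Irr(G)$ but conjugacy class sizes of a quotient are not controlled by those of $G$, class-size multiplicities need not be monotone under passing to quotients, and the Moret\'{o}-style gluing step breaks. One caution about your ``free'' reduction: bounding the number $n(G)$ of distinct class sizes in terms of $m$ is, via Landau's theorem, \emph{equivalent} to the conjecture rather than a simplification of it, so that step buys nothing.

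Your fallback program is essentially the content of the paper, though each piece still needs to be carried out. For classical groups of Lie type the paper does not argue via regular semisimple classes of $\GL_n(q)$ with a fixed multiset of factor-degrees, but instead uses Malle's result (quoted from Moret\'{o}) that $G_n(q)$ has a semisimple element $s$ of order at least $(q^{[n+1/2]}-1)/(n+1)$ conjugate to at most $2n+1$ of its powers; since $s$ and $s^t$ share a centralizer whenever $t$ is coprime to $|s|$, Euler's totient function yields many classes of equal size, and the exceptional groups are handled by an orbit-versus-divisor count. For alternating groups your combinatorial assertion is exactly what must be proved, and the paper proves it by an explicit construction: the partitions $(10,9,1)$ and $(15,3,2)$ of $20$ both have centralizer size $90$, and stacking suitably scaled copies produces $2^k$ even partitions of $n$ with equal centralizer size for all large $n$. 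For Theorem D the gluing requires, beyond what you sketch, a Babai--Pyber orbit-counting bound to control the number of copies of a given simple group in the socle, together with a lemma that every nonabelian simple group $S$ has a nontrivial $\Aut(S)$-invariant conjugacy class. If you supply those ingredients you recover Theorems C and D; as a proof of Conjecture B itself, the proposal has the same gap as the paper, which is unavoidable at present.
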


\noindent We would like to point out that this problem is not new,
at least for solvable groups. Indeed, it has been proved by
Jaikin-Zapirain for nilpotent groups in \cite{Jaikin1} and then for
solvable groups in \cite{Jaikin2}.

Our first result of this note is the proof of Conjecture B for
finite simple groups.

\begin{theoremC} Let $G$ be a finite simple group. If the largest multiplicity of conjugacy class
sizes of $G$ is $m$, then $|G|$ is $m$-bounded.
\end{theoremC}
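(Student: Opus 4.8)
The plan is to invoke the classification of finite simple groups. If $G$ is sporadic (or the Tits group) there is nothing to prove, since there are only finitely many such $G$; so it suffices to treat the alternating groups and the families of groups of Lie type, and for each family to show that the largest multiplicity of a conjugacy class size of $G$ tends to infinity with $|G|$, which is exactly the assertion that only finitely many $G$ in the family have largest multiplicity at most $m$. The basic mechanism I would use throughout is a pigeonhole: if one can exhibit inside $G$ a set of $N$ conjugacy classes realizing at most $D$ distinct sizes, then some size has multiplicity at least $N/D$, so it is enough to produce such a set with $N/D$ large.

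For a group of Lie type I would write $G=\mathbf{G}^F/Z$, where $\mathbf{G}$ is a simple, simply connected algebraic group of rank $r$, $F$ a Frobenius endomorphism with associated field size $q$, and $Z=Z(\mathbf{G}^F)$, a group of order at most $r+1$; the classes to use are the semisimple ones. By Steinberg's theorem $\mathbf{G}^F$ has exactly $q^r$ semisimple conjugacy classes, so $G$, whose classes are the $Z$-orbits on those of $\mathbf{G}^F$, has at least $q^r/(r+1)$ semisimple classes. For the number of distinct sizes among them I would appeal to the structure of centralizers: by Steinberg's connectedness theorem the centralizer of a semisimple element of $\mathbf{G}$ is connected reductive of rank $r$, with root system a closed (pseudo-Levi) subsystem of that of $\mathbf{G}$, and by the Borel--de Siebenthal description the number of such subsystems, and of the corresponding $F$-forms and of the orders $|C_{\mathbf{G}^F}(s)|$ (each a polynomial in $q$) they produce, is bounded by some $f(r)$. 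One checks that $f$ may be taken subexponential: for the classical types these centralizers are products of smaller classical and general linear/unitary groups, so their isomorphism types are counted by partition-type generating functions and their number is $2^{o(r)}$; the exceptional types have $r\le 8$. Hence $G$ has a class size of multiplicity at least $q^r/((r+1)f(r))$, and since within a fixed family $|G|\to\infty$ forces $q\to\infty$ or $r\to\infty$, in either case $q^r/((r+1)f(r))\ge 2^r/((r+1)f(r))\to\infty$ or $q^r/((r+1)f(r))\to\infty$, which settles the Lie type case.

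For $G=A_n$ the argument is combinatorial. Conjugacy classes of $S_n$ correspond to partitions $\lambda=(1^{a_1}2^{a_2}\cdots)$ of $n$, with $S_n$-centralizer order $z(\lambda)=\prod_i i^{a_i}a_i!$ and class size $n!/z(\lambda)$; a class meeting $A_n$ either remains a single $A_n$-class of the same size or splits into two of half that size, the latter precisely when every part of $\lambda$ is odd and the parts are distinct. Fixing a target $m$, I would construct a bounded collection of ``gadgets'': pairs $(\mu_j,\nu_j)$ of partitions of a common integer with $z(\mu_j)=z(\nu_j)$, realized on pairwise disjoint sets of parts all $\ge 2$, such pairs existing because equality of the $z$-values reduces to solvable Diophantine relations among part sizes. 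Given $\log_2 m$ such gadgets, for every sufficiently large $n$ one builds partitions of $n$ by independently choosing $\mu_j$ or $\nu_j$ for each $j$ and padding with the appropriate number of fixed points: all these partitions have the same centralizer order, and --- choosing the gadgets so that the parity of the number of even parts is controlled and at least one even part is always present --- at least $m$ of them correspond to genuine non-splitting $A_n$-classes of one and the same size. Thus the largest multiplicity in $A_n$ grows with $n$.

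The main obstacle is the Lie type case, and specifically the input that the number of distinct sizes among the $\sim q^r$ semisimple classes grows strictly more slowly than $q^r$. This rests on the full structure theory of centralizers of semisimple elements in reductive groups --- Steinberg's connectedness theorem, the Borel--de Siebenthal classification of the possible centralizer types, the evaluation of their orders as polynomials in $q$, and a subexponential count of how many types occur in rank $r$. The alternating case, while it requires some care with the padding step and with class splitting in $A_n$, is elementary once the right family of centralizer-order-preserving modifications of cycle types has been found.
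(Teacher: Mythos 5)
Your overall strategy is sound and your treatment of the alternating groups is essentially the paper's: the paper also builds ``gadgets'' --- it takes the two partitions $(10,9,1)$ and $(15,3,2)$ of $20$, both with centralizer order $90$, rescales them by powers of $21$ so that $k$ independent binary choices can be made on disjoint sets of distinct parts, and then pads with one large extra part (rather than fixed points) to reach every sufficiently large $n$ while controlling parity and avoiding class splitting in $A_n$. Where you genuinely diverge is the Lie type case. The paper does not count semisimple classes against centralizer types at all: for classical groups it quotes Malle's result that $G_n(q)$ contains a semisimple element $s$ of order at least $(q^{[n+1/2]}-1)/(n+1)$ that is conjugate to at most $2n+1$ of its powers, so the $\varphi(|s|)$ generators of $\langle s\rangle$, all with \emph{literally the same} centralizer, fall into at least $\varphi(|s|)/(2n+1)$ classes of one common size; the bound $\varphi(k)>\sqrt{k/2}$ finishes it. Exceptional groups (bounded rank) are handled by Moret\'o's orbit-counting lemma. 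Your route --- Steinberg's count of $q^r$ semisimple classes of $\mathbf{G}^F$ against a bound $f(r)$ on the number of distinct semisimple centralizer orders --- works, but the burden it carries is exactly the step you flag: you must really prove $f(r)=2^{o(r)}$ (needed because $q$ may be stuck at $2$ while $r\to\infty$), and ``bounded by the number of closed subsystems times the number of $F$-forms'' must be organized carefully, since crude bounds involving $|W|$ are factorial in $r$; for type $A$ one is counting multisets $\{(n_i,d_i)\}$ with $\sum n_i d_i=n$, whose number is $\exp(O(\sqrt{n}\log n))$, and similarly for the other classical types. You should also patch the passage from $\mathbf{G}^F$ to the simple quotient $G=\mathbf{G}^F/Z$: classes of $G$ are not just $Z$-orbits of classes of $\mathbf{G}^F$, and $|\cl_G(\bar g)|$ equals $|\cl_{\mathbf{G}^F}(g)|$ divided by a divisor of $|Z|$, so the count of distinct sizes must be inflated by $d(|Z|)\le r+1$ --- harmless, but it needs saying. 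In exchange, your argument is self-contained modulo standard Deligne--Lusztig-era structure theory and does not rely on Malle's specific theorem; the paper's is shorter but imports that result as a black box.
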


Using Theorem C and drawing upon some techniques of Moret\'{o} in
\cite{Moreto}, we prove a weaker version of Conjecture B.

\begin{theoremD}\label{D} If the largest multiplicity of conjugacy class
sizes of any quotient of a finite group $G$ is $m$, then $|G|$ is
$m$-bounded.
\end{theoremD}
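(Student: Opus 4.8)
The plan is to argue by induction on $|G|$, adapting the reduction Moret\'o uses in \cite{Moreto} for character degrees so that at each stage one exploits the hypothesis on the \emph{quotients} of $G$. If $G$ is solvable we are done by the theorem of Jaikin-Zapirain \cite{Jaikin2} (Conjecture~B for solvable groups), since $G$ is a quotient of itself and so has largest class-size multiplicity at most $m$; this is the base case. So assume $G$ is not solvable. First I would reduce to the case that $G$ has a unique minimal normal subgroup: if $N_{1}\neq N_{2}$ are two of them then $N_{1}\cap N_{2}=1$, so $G$ embeds into $G/N_{1}\times G/N_{2}$; each $G/N_{i}$ is a proper quotient of $G$ and every quotient of $G/N_{i}$ is a quotient of $G$, so by induction $|G/N_{i}|$ is $m$-bounded and hence so is $|G|$. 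From now on let $N$ be the unique minimal normal subgroup of $G$, and record the general fact that every abelian quotient of $G$ has order at most $m$ (all of its classes are trivial).

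Suppose first that $N$ is an elementary abelian $p$-group, and put $H=G/C_{G}(N)$. Since $N\leq C_{G}(N)$ and $N\neq 1$, $H$ is a proper quotient of $G$ satisfying the hypothesis, so $|H|$ is $m$-bounded by induction, and $H$ acts faithfully on $N$. Because $N$ is abelian and normal, the $G$-conjugacy class of any $v\in N$ equals the $H$-orbit of $v$, and so has size at most $|H|$, and distinct $H$-orbits are distinct $G$-classes. Hence $N\smallsetminus\{1\}$ splits into at least $(|N|-1)/|H|$ conjugacy classes of $G$, each of size at most $|H|$, so by the pigeonhole principle some size is shared by at least $(|N|-1)/|H|^{2}$ classes; therefore $|N|\leq m|H|^{2}+1$ is $m$-bounded. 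As $G/N$ is a proper quotient satisfying the hypothesis, $|G/N|$ is $m$-bounded by induction, and hence $|G|=|N|\cdot|G/N|$ is $m$-bounded.

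Now suppose $N$ is nonabelian, say $N\cong S^{k}$ with $S$ a nonabelian simple group. Then $C_{G}(N)$ is normal in $G$ and meets $N$ trivially, so $C_{G}(N)=1$ by uniqueness of $N$, whence $N\leq G\leq\Aut(N)=\Aut(S)\wr\Sym_{k}$. The conjugation action of $G$ on the $k$ simple factors of $N$ is transitive with kernel $K=\bigcap_{i}N_{G}(S_{i})\supseteq N$, so $G/K$ is a transitive subgroup of $\Sym_{k}$ and $k\leq|G/K|$; since $G/K$ is a proper quotient satisfying the hypothesis, $k$ is $m$-bounded by induction. It remains to bound $|S|$, and here Theorem~C enters. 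By (the proof of) Theorem~C, the largest multiplicity of conjugacy class sizes of a simple group $S$ tends to infinity with $|S|$, and in fact grows faster than $|\Out(S)|^{2}$; fix conjugacy classes $x_{1}^{S},\dots,x_{r}^{S}$ of $S$ all of one size $c$, where $r$ is the largest multiplicity of a class size of $S$. View $x_{j}$ as the element $(x_{j},1,\dots,1)$ of the first factor $S_{1}\leq N\leq G$. Its $N$-class has size $c$, while its $G$-class has size $c\cdot a_{j}$ with $1\leq a_{j}\leq k|\Out(S)|$: in $\Aut(S)\wr\Sym_{k}$ the only way to conjugate it to an element still supported in a single coordinate is to move that coordinate (at most $k$ choices) and to replace $x_{j}^{S}$ by one of the at most $|\Out(S)|$ conjugacy classes of $S$ in its $\Aut(S)$-orbit. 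Consequently these elements lie in at least $r/|\Out(S)|$ distinct conjugacy classes of $G$ whose sizes take at most $k|\Out(S)|$ values, so some class size of $G$ has multiplicity at least $r/(k|\Out(S)|^{2})$; hence $r\leq m\cdot k\cdot|\Out(S)|^{2}$, and as $k$ is $m$-bounded and $r$ outgrows $|\Out(S)|^{2}$, $|S|$ is $m$-bounded. Finally $|G|=|G/K|\cdot|K|\leq k!\,|\Aut(S)|^{k}$ is $m$-bounded, completing the induction.

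The main obstacle is the nonabelian step. The bare statement of Theorem~C --- that $|G|$ is $m$-bounded for simple $G$ --- is not by itself enough: one needs a \emph{quantitative} lower bound on the largest multiplicity of class sizes of a simple group $S$, growing fast enough in $|S|$ to absorb the loss (a factor $|\Out(S)|^{2}$, together with a bounded power of $k$) incurred when a high-multiplicity class size of $S^{k}$ is inflated to $G$. This forces one to revisit the case analysis behind Theorem~C --- alternating groups, sporadic groups, and the families of Lie type --- and to check that the multiplicities produced there are at least, say, a fixed positive power of $|S|$, which comfortably dominates $|\Out(S)|$. A secondary point needing care is the organisation of the induction: one must verify that the bound produced at each stage depends on $m$ alone. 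This is ensured by the fact that the two ``ground'' cases --- abelian quotients and simple quotients --- are bounded directly in terms of $m$ (by $m$, respectively by Theorem~C via the argument above), while every other step invokes the inductive hypothesis only on proper quotients of $G$.
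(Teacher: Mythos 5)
Your local computations are sound, and in several places you take a genuinely different, shorter route than the paper: you bound the number $k$ of simple factors via the transitivity of $G/K$ on them rather than via the Babai--Pyber orbit theorem, and you bound $|S|$ via a quantitative strengthening of Theorem~C rather than via the Schreier conjecture and the solvable case. But there is a genuine gap, and it is exactly the point you dismiss as ``secondary'': the induction on $|G|$ does not close. Each of your steps proves an inequality of the form $|G|\le F(m,B)$, where $B$ bounds the orders of certain \emph{proper quotients} of $G$ (e.g.\ $|G|\le |G/N_1|\cdot|G/N_2|$; or $|N|\le m|G/C_G(N)|^2+1$ together with $|G|=|N|\,|G/N|$; or $k\le |G/K|$). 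Feeding the inductive hypothesis $B\le f(m)$ into these gives $|G|\le F(m,f(m))$, which is strictly larger than $f(m)$, so no single function can serve simultaneously as inductive hypothesis and conclusion. Unwinding the recursion instead, the final bound is an iterate of $F$ whose depth is the length of the chain of proper quotients traversed --- essentially the chief length of $G$ --- and that depth is not bounded in terms of $m$ a priori; it becomes bounded only once the theorem itself is known. The abelian step is where this bites hardest: your recursion peels off abelian minimal normal subgroups one at a time, multiplying the bound at each stage, and nothing in the argument bounds the number of stages. Your base case ``$G$ solvable'' does not repair this, because the solvable part of a non-solvable $G$ is a subgroup, not a quotient, so Jaikin-Zapirain cannot be invoked on it directly.

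The paper avoids the induction by a decomposition of \emph{fixed} depth. It first shows that once $|G:O_\infty(G)|$ is $m$-bounded, so is $|O_\infty(G)|$: for $x\in O_\infty(G)$ the ratio $|\cl_G(x)|/|\cl_{O_\infty(G)}(x)|$ divides the index, so the multiplicity bound for $G$ transfers to the whole solvable radical at once and Theorem~\ref{Jaikin-solvable} applies in one shot. It then bounds $|G:O_\infty(G)|$ by passing to $G/O_\infty(G)$, embedding it into $\Aut(\Soc(G/O_\infty(G)))$, and bounding the socle; here $k$ is controlled by Lemma~\ref{orbit} together with an $\Aut(S)$-invariant class (Lemma~\ref{simple-extension}) and Lemma~\ref{factorial}, precisely because no inductive hypothesis is available to bound $|G/K|$. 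If you reorganize along these lines your two shortcuts disappear in their present form, but your pigeonhole count on classes supported in a single coordinate, and your observation that the largest class-size multiplicity of a simple group outgrows $|\Out(S)|^2$ (true, though it does require rechecking the estimates behind Theorem~C, as you acknowledge), can still be put to use.
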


\begin{notation} Let $X$ be a finite group and $x\in X$. We denote
the centralizer of $x$ in $X$ by $C_X(x)$, the conjugacy class of
$x$ in $X$ by $\cl_X(x)$, and the size of this class by
$|\cl_X(x)|$. Other notations are standard.
\end{notation}


\section{Proof of Theorem C}

The aim of this section is to prove Theorem C. The case of cyclic
groups of prime orders is obvious. Therefore, it remains to consider
finite simple groups of Lie type and alternating groups.

\begin{lemma}\label{lietype} Theorem \textup{C} holds for finite simple groups of Lie type.
\end{lemma}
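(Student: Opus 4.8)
The strategy is to show that a simple group of Lie type with bounded multiplicity of conjugacy class sizes must have bounded order, which we do by exhibiting, for each family of large rank or large field size, many classes sharing a common size. The key observation is that $|\cl_G(x)| = |G|/|C_G(x)|$, so two classes have the same size whenever their centralizers have the same order; to bound the multiplicity $m$ we want to produce many classes whose centralizer orders coincide, or alternatively many \emph{distinct} class sizes, since if $G$ has $k$ conjugacy classes and the largest multiplicity is $m$ then $k \le m \cdot (\text{number of distinct class sizes})$, forcing the number of distinct class sizes to be large. The cleanest route is to use the known lower bounds for $k(G)$, the number of conjugacy classes: for a simple group of Lie type of rank $r$ over $\FQ$ one has $k(G)$ growing at least like a fixed power of $q$ and unboundedly in $r$. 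So it suffices to show the number of \emph{distinct} conjugacy class sizes is bounded in terms of $m$ only if $|G|$ is bounded — equivalently, to produce many classes of equal size.

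Concretely, I would split into the two regimes. \emph{Bounded rank, unbounded $q$:} here I would work inside a maximal torus $T$ (say a split maximal torus) and count its regular semisimple elements; the Weyl group $W$ has bounded order, so $T$ contributes roughly $|T|/|W| \sim q^r/|W|$ semisimple classes, and among regular semisimple elements of $T$ the centralizer in $G$ is $T$ itself, so all these classes have the \emph{same} size $|G|/|T|$. That single equality already gives multiplicity on the order of $q^r$, which exceeds any fixed $m$ once $q$ is large. \emph{Unbounded rank:} here one uses that the number of semisimple classes grows without bound with the rank (again comparing $|T|$ for the various tori, or simply invoking that $k(G) \to \infty$), while the number of possible centralizer orders — hence distinct class sizes — cannot keep pace with a bounded multiplicity; more hands-on, in type $A_{r}$ one can take diagonal (or unipotent) elements with prescribed block structure and see that the resulting centralizer orders collide in large families. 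In both regimes the conclusion is that once either $q$ or $r$ is large, some class size is repeated more than $m$ times, so boundedness of $m$ forces both $q$ and $r$ bounded, leaving only finitely many groups — hence $|G|$ is $m$-bounded.

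I would organize the write-up as: (i) reduce to bounding $q$ and the rank; (ii) handle the torus argument for the regular semisimple classes, giving the $q$-bound; (iii) handle the rank bound, either via the growth of $k(G)$ together with a bound on the number of distinct class sizes, or via an explicit construction of equal-sized classes; (iv) conclude by finiteness. One must also dispose of the finitely many exceptional and twisted types and the sporadic small-field coincidences separately, but these contribute only finitely many groups and so are harmless.

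The main obstacle I expect is step (iii), the unbounded-rank case: bounding the number of \emph{distinct} conjugacy class sizes of a group of Lie type of rank $r$ is delicate because class sizes are products of cyclotomic-polynomial values in $q$ and can coincide for subtle arithmetic reasons, so a clean counting argument needs care — most likely one sidesteps this by directly building, for each $r$, an explicit family of more than $m$ classes of equal size (e.g. using semisimple elements whose centralizers are isomorphic tori or Levi subgroups of the same order obtained by permuting eigenvalue multiplicities), which reduces the problem to an elementary combinatorial count on partitions. The rest — the bounded-rank torus argument and the final appeal to finiteness — is routine.
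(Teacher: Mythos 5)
Your bounded-rank argument is fine: for fixed rank and $q\to\infty$, the regular semisimple elements of a fixed maximal torus $T$ all have centralizer of order $|T|$ (up to bounded center/quotient corrections), so they supply on the order of $q^r/|W|$ classes of one common size, which eventually exceeds any fixed $m$. This is a legitimate alternative to what the paper does for the exceptional groups, where it instead compares the number of $\Aut$-orbits on the group with $d(|\Aut(G_n(q))|)$, using that every class size divides $|G|$.

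The genuine gap is in your unbounded-rank step (iii). The counting route you first suggest --- ``$k(G)\to\infty$ while the number of possible class sizes cannot keep pace'' --- actually fails there: the only a priori bound on the number of distinct class sizes is $d(|G|)$, and for, say, $G=\mathrm{PSL}_n(2)$ one has $k(G)\approx c\cdot 2^n$ while $d(|G|)\geq \bigl(\tfrac{n(n-1)}{2}+1\bigr)2^{\,n-O(1)}$ (Zsygmondy primitive prime divisors of the factors $2^i-1$ already give $n-O(1)$ distinct odd primes), so $d(|G|)/k(G)\to\infty$ and no pigeonhole contradiction results. Your fallback --- explicit families of semisimple classes with centralizers of equal order obtained by permuting eigenvalue multiplicities --- is plausible in type $A$ over a field with enough eigenvalues, but over small fields you are forced into multisets of irreducible polynomials over $\FQ$ and into the other classical types, and you give no argument there; as you yourself note, this is exactly where the work lies. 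The paper closes this gap with a single uniform tool (Malle's theorem, quoted as Corollary~3.4 of \cite{Moreto}): every simple classical group $G_n(q)$ contains a semisimple element $s$ of order at least $(q^{[n+1/2]}-1)/(n+1)$ that is conjugate to at most $2n+1$ of its powers. All powers $s^t$ with $t$ coprime to $|s|$ share the centralizer $C_G(s)$, hence a common class size, and they fall into at least $\varphi(|s|)/(2n+1)$ distinct classes; since $\varphi(k)>\sqrt{k/2}$, this quantity tends to infinity with $|G_n(q)|$ in both the large-$q$ and large-$n$ regimes. Without Malle's result (or a worked-out substitute for your eigenvalue-permutation construction covering all classical types and all $q$), your proof does not go through for classical groups of unbounded rank.
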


\begin{proof} Let $G_n(q)$ be a simple group of Lie type (twisted or untwisted) of rank $n$ defined over a field of $q$ elements and let
$k^\ast(G_n(q))$ denote the number of orbits of $\Aut(G_n(q))$
acting on $G_n(q)$. Lemma~6.3 of \cite{Moreto} claims that
$$\frac{k^\ast(G_n(q))}{d(|\Aut(G_n(q))|)}\rightarrow \infty \text{ as } q\rightarrow
\infty,$$ where $d(|\Aut(G_n(q))|)$ is the number of positive
divisors of $|\Aut(G_n(q))|$. Since every conjugacy class size of
$G_n(q)$ divides $|G_n(q)|$, it follows that the average
multiplicity of conjugacy class sizes of $G_n(q)$ tends to $\infty$
as $q$ tends to $\infty$ and therefore we are done for the case of
exceptional simple groups, where the rank $n$ is $\leq 8$.

Assume that $G_n(q)$ is a simple classical group. By a result of
Malle (see Corollary~3.4 of~\cite{Moreto}), $G_n(q)$ has a
semisimple element $s$ of order at least $(q^{[n+1/2]}-1)/(n+1)$
conjugate to at most $2n+1$ of its powers (actually, for the
orthogonal groups, Malle states only the results for projective
special orthogonal groups but the same arguments work for the simple
groups). Note that $s$ and $s^t$ have the same centralizers whenever
$t$ is coprime to $|s|$. It follows that $G_n(q)$ has at least
$$\frac{\min_{k\geq (q^{[n+1/2]}-1)/(n+1)}\varphi(k)}{2n+1}$$ conjugacy
classes (of semisimple elements) of the same sizes, where $\varphi$
is the Euler's totient function. It is well-known that
$\varphi(k)>\sqrt{k/2}$ for any $k>0$. Therefore, the above quotient
tends to $\infty$ as $|G_n(q)|$ tends to $\infty$, as desired.
\end{proof}

In the following lemma, $S_n$ and $A_n$ are the symmetric and
alternating groups, respectively, of degree $n$.

\begin{lemma}\label{alternating} Theorem \textup{C} holds for the alternating groups.
\end{lemma}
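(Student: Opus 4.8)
The plan is to show that the largest multiplicity of conjugacy class sizes of $A_n$ grows without bound as $n\to\infty$; this is equivalent to $n$, and hence $|A_n|=n!/2$, being bounded in terms of that multiplicity. I would first observe that the counting argument of Lemma~\ref{lietype} does not transfer: the number of $\Aut(A_n)$-orbits on $A_n$ is of the order of the partition number $p(n)=e^{O(\sqrt n)}$, while $d(n!)$ eventually exceeds $p(n)$, so one cannot deduce that the \emph{average} multiplicity is large. Instead I would \emph{construct}, for every $M$ and every sufficiently large $n$, more than $M$ conjugacy classes of $A_n$ sharing one common size. Here one uses that the conjugacy classes of $A_n$ correspond to partitions $\lambda$ of $n$ with an even number of even parts, such a class splitting into two $A_n$-classes precisely when all parts of $\lambda$ are odd and distinct; and that, when $\lambda$ is not of this exceptional shape, an element $x$ of cycle type $\lambda$ has $|\cl_{A_n}(x)|=|\cl_{S_n}(x)|=n!/z_\lambda$, where $z_\lambda=\prod_{i\ge1}i^{m_i}m_i!$ with $m_i$ the number of parts of $\lambda$ equal to $i$. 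So the goal becomes: for all large $n$, exhibit more than $M$ partitions of $n$, each an even permutation, none of the exceptional shape, all with the same value of $z_\lambda$.

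The key building block would be a small ``gadget'': the two partitions $M_1=(15,12,2)$ and $M_2=(20,6,3)$ of $29$. Each has three distinct parts, all $\ge2$; each has two even parts, so represents an even permutation; and $z_{M_1}=2\cdot12\cdot15=360=3\cdot6\cdot20=z_{M_2}$. I would then amplify by multiplicative scaling. Fix $r$ and pick odd integers $1=c_1<c_2<\cdots<c_r$ with $c_{i+1}>10c_i$, so that the intervals $[2c_i,20c_i]$ are pairwise disjoint. Scaling every part of a partition by an odd integer $c$ multiplies $z_\lambda$ by $c$ to the power (number of parts) and preserves the sign of the permutation it represents (a permutation of $N$ points with $k$ cycles has sign $(-1)^{N-k}$, and $cN\equiv N\pmod2$ for odd $c$). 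Hence, for each $\varepsilon\in\{1,2\}^r$, the multiset-union $P_\varepsilon$ of $c_1M_{\varepsilon_1},\dots,c_rM_{\varepsilon_r}$ is a partition of $K:=29\sum_{i=1}^rc_i$, has all parts $\ge2$, represents an even permutation, and --- because distinct blocks involve disjoint sets of part-values --- satisfies $z_{P_\varepsilon}=\prod_{i=1}^r(360\,c_i^3)$, a value independent of $\varepsilon$. The $2^r$ partitions $P_\varepsilon$ are pairwise distinct, since the parts of $P_\varepsilon$ lying in $[2c_i,20c_i]$ determine $\varepsilon_i$.

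To finish, for any $n\ge K$ I would adjoin $n-K$ parts equal to $1$ to each $P_\varepsilon$ (if $n=K$, nothing is adjoined); since each $P_\varepsilon$ already has an even part, none of the resulting $2^r$ partitions of $n$ is of the exceptional shape, each is still an even permutation, and their common $z$-value is $360^{r}\bigl(\prod_{i}c_i^3\bigr)(n-K)!$. Therefore $A_n$ has at least $2^r$ conjugacy classes of the single size $n!/\bigl(360^{r}(\prod_ic_i^3)(n-K)!\bigr)$. As $K=K(r)\ge29r\to\infty$, one may, for each $n$, take $r$ as large as possible with $K(r)\le n$; then $r\to\infty$ with $n$, so the largest multiplicity of conjugacy class sizes of $A_n$ tends to infinity, which is what we wanted.

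The hard part, I expect, is the gadget: one needs two (or more) multisets of positive integers with the \emph{same sum and the same} $z$\emph{-value} --- a genuine multiplicative coincidence --- subject to the parity constraint of representing even permutations. Given such a gadget, the scaling and the padding by $1$'s are essentially bookkeeping (multiplicativity of $z$ across disjoint part-supports, and the behaviour of the sign under odd scaling). One should also note that this construction only yields a multiplicity growing like a fixed power of $n$ --- far from sharp, but all that the notion of ``$m$-bounded'' needs --- and that the finitely many small $n$ admitting no suitable $r$ cause no trouble.
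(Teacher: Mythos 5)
Your proposal is correct and follows essentially the same strategy as the paper's proof: a pair of partitions of equal size and equal centralizer order (the paper uses $(10,9,1)$ and $(15,3,2)$ of size $20$ with centralizer order $90$, you use $(15,12,2)$ and $(20,6,3)$ of size $29$ with $z=360$), amplified into $2^r$ partitions by scaling the blocks into pairwise disjoint ranges of part-values, and then completed to a partition of any large $n$ by one extra piece. Your choices of padding with $n-K$ fixed points instead of prepending a single large part, and of forcing an even part so that no class splits in $A_n$, are minor (and slightly cleaner) variations that remove the paper's even/odd case distinction on $n$ and its pigeonhole over the possible halving of centralizers.
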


\begin{proof} It is known that conjugacy classes of the symmetric group $S_n$ of degree
$n$ is in one-to-one correspondence with partitions of size $n$. Let
us recall some standard notation and terminology of partitions. A
partition $\lambda$ of size $n$ is a finite sequence
$(\lambda_1,\lambda_2,...)$ such that
$\lambda_1\geq\lambda_2\geq\cdot\cdot\cdot$ and
$\lambda_1+\lambda_2+\cdot\cdot\cdot=n$. Each $\lambda_i$ is called
a part of $\lambda$. Let $m_i(\lambda)$ denote the number of parts
of $\lambda$ of size $i$. We say that $\lambda$ is even (odd, resp.)
if permutations of cycle type $\lambda$ are even (odd, resp.). We
will often call the centralizer size of a permutation (in $S_n$) of
cycle type $\lambda$ by centralizer size of $\lambda$ and denote it
by $C(\lambda)$. It is well-known that
$$C(\lambda)=\prod_{i}(i^{m_i(\lambda)})(m_i(\lambda))!.$$
In particular, if all the parts of $\lambda$ are different then
$C(\lambda)$ is product of those parts.

We will prove that for any $M$, there exists an integer $N(M)$ big
enough so that for each $n\geq N$ there are $M$ even partitions of
size $n$ with the same centralizer size. This will imply the lemma
since the centralizer size of an even permutation in $A_n$ is either
equal or a half of that in $S_n$.

If $\lambda=(\lambda_1,\lambda_2,...)$ is a partition and
$\lambda_0\geq\lambda_1$, we define the partition
$$(\lambda_0,\lambda):=(\lambda_0,\lambda_1,\lambda_2,...).$$
It is clear that
\begin{equation}\label{centralizer}
C((\lambda_0,\lambda))=\lambda_0C(\lambda) \text{ if }
\lambda_0>\lambda_1.\end{equation}

For each $i\in \NN$, let $(a_i,b_i,c_i)$ be either $(10,9,1)$ or
$(15,3,2)$. Then $(a_i,b_i,c_i)$ is a partition of size $20$ with
centralizer size $90$. Choose an odd integer $k$ so that $2^{k}>M$.
Consider the set $P$ of $2^k$ odd partitions of the form
$$(21^{k-1}a_{k},
21^{k-1}b_{k},21^{k-1}c_{k},...,21a_2,21b_2,21c_2,a_1,b_1,c_1).$$
All partitions in $P$ have same size
$20(21^{k-1}+21^{k-2}+\cdot\cdot\cdot+1)=21^k-1$ and centralizer
size $90^k\cdot 21^{3k(k-1)/2}$. Therefore, if $n$ is an even
integer bigger than $21^k+21^{k-1}15$, we obtain $2^k$ even
partitions of the form $(n-|\lambda|,\lambda)$ with $\lambda\in P$,
all of size $n$ and centralizer size $(n-|\lambda|)C(\lambda)$
by~\eqref{centralizer}.

To handle odd $n$, we consider the set $P'$ of $2^{k+1}$ even
partitions of the form
$$(21^{k}a_{k+1},
21^{k}b_{k+1},21^{k}c_{k+1},...,21a_2,21b_2,21c_2,a_1,b_1,c_1).$$ As
before, all partitions in $P'$ have the same size $21^{k+1}-1$ and
centralizer size $90^{k+1}\cdot 21^{3k(k+1)/2}$. Hence, if $n$ is an
odd integer bigger than $21^{k+1}+21^k15$, we obtain $2^{k+1}$ even
partitions of the form $(n-|\lambda|,\lambda)$ with $\lambda\in P'$.

The lemma is completely proved.
\end{proof}


\section{Proof of Theorem D}

We collect some results needed in the proof of Theorem D. The
following number-theoretic result is in \cite{Erdos} and is stated
as Lemma~4.1 in~\cite{Moreto}.

\begin{lemma}\label{factorial} \textup{(i)} $\lim_{n\rightarrow \infty}\frac{d(n!)}{2^{\frac{c\log n!}{(\log\log
n!)^2}}}$, where $c$ is a constant and $d(n!)$ is the number of
divisors of $n!$.

\textup{(ii)}
$\lim_{k\rightarrow\infty}\frac{a^{ck}}{(k+1)d(k!)}=\infty$, where
$c$ is a positive constant. \hfill$\Box$
\end{lemma}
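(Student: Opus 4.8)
The plan is to treat part (i) as the substantive estimate and to deduce part (ii) as an easy consequence, essentially reconstructing the argument of Erd\H{o}s cited in \cite{Erdos}. By Legendre's formula one has $n!=\prod_{p\le n}p^{v_p}$ with $v_p=v_p(n!)=\sum_{j\ge 1}\lfloor n/p^{j}\rfloor$, so that $d(n!)=\prod_{p\le n}(v_p+1)$ and hence $\log d(n!)=\sum_{p\le n}\log(v_p+1)$. Since $\log n!\sim n\log n$ and $\log\log n!\sim \log n$, the target exponent satisfies $\frac{\log n!}{(\log\log n!)^{2}}\sim \frac{n}{\log n}$; thus part (i) amounts to showing $\log d(n!)=(c\log 2+o(1))\,\frac{n}{\log n}$ for an explicit constant, i.e. $\log d(n!)\asymp n/\log n$ with the correct leading coefficient.

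First I would dispose of the small primes: for $p\le \sqrt n$ there are at most $\sqrt n$ of them, each contributing at most $\log(n+1)$, for a total of $O(\sqrt n\,\log n)=o(n/\log n)$; these are also the only primes for which the higher-power terms $\lfloor n/p^{j}\rfloor$ with $j\ge 2$ matter, so they may be ignored throughout. For $\sqrt n<p\le n$ one simply has $v_p=\lfloor n/p\rfloor$. For the lower bound I would restrict to $p\in(n/2,n]$, where $v_p=1$ and each term equals $\log 2$; by Chebyshev's estimates (or the prime number theorem) there are $\sim \frac{n}{2\log n}$ such primes, giving $\log d(n!)\gtrsim \frac{\log 2}{2}\cdot\frac{n}{\log n}$.

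For the matching upper bound and the exact constant I would organize the range $\sqrt n<p\le n$ by the level sets $j=\lfloor n/p\rfloor$, that is $p\in(n/(j+1),\,n/j]$ for $1\le j\le \sqrt n$. The total contribution is $\sum_{j}\log(j+1)\,[\pi(n/j)-\pi(n/(j+1))]$, which after Abel summation becomes $\sum_{j}\pi(n/j)\,\log(1+1/j)$ plus boundary terms. Using $\pi(n/j)\sim \frac{n}{j\log(n/j)}$ and $\log(1+1/j)\sim 1/j$, the main terms behave like $\frac{n}{\log n}\sum_{j\ge 1}j^{-2}=\frac{\pi^{2}}{6}\cdot\frac{n}{\log n}$, identifying the constant as $c=\frac{\pi^{2}}{6\log 2}$ and matching the lower bound in order. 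The hard part will be making this level-set estimate uniform and controlling the tail: I must verify that the terms with $j$ comparable to $\sqrt n$ (where $\log(n/j)$ is no longer $\sim\log n$) contribute only $o(n/\log n)$, and that the Abel-summation boundary terms together with the error in $\pi(x)\sim x/\log x$, summed over all levels, stay below the main term. This is precisely where the prime number theorem (or a strong Chebyshev bound combined with Mertens-type partial summation) is genuinely needed and where the constant $\pi^{2}/6$ is pinned down.

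Finally, part (ii) follows immediately from the upper bound in (i). Since $\log d(k!)=O(k/\log k)=o(k)$, we have $d(k!)=2^{o(k)}$; for any fixed base $a>1$ and any positive constant $c$, the numerator $a^{ck}=e^{(c\log a)k}$ grows genuinely exponentially in $k$, while $(k+1)\,d(k!)=e^{o(k)}$ is subexponential, so the quotient tends to $\infty$. This direction uses only the order of magnitude from (i), not the precise value of the constant.
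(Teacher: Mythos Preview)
The paper does not prove this lemma at all: it is quoted verbatim from \cite{Erdos} (and restated from \cite[Lemma~4.1]{Moreto}), with the $\Box$ indicating that no argument is supplied. So your write-up is not a reconstruction of the paper's proof but an independent derivation, and in that sense it goes well beyond what the paper does. For the purposes of the paper only part~(ii) is ever used (in step~4 of the proof of Theorem~D), and for that one needs nothing more than $\log d(k!)=O(k/\log k)=o(k)$, which your level-set argument delivers cleanly. Your deduction of (ii) from this upper bound is correct.

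Two points about your treatment of (i). First, the constant you name is wrong: after Abel summation the main term is $\frac{n}{\log n}\sum_{j\ge 1}\frac{1}{j}\log\bigl(1+\tfrac{1}{j}\bigr)$, and replacing $\log(1+1/j)$ by $1/j$ is only valid term-by-term for large $j$, not for the whole sum. The actual constant is $c_0=\sum_{j\ge 1}\frac{\log(j+1)}{j(j+1)}=\sum_{j\ge 1}\frac{1}{j}\log(1+1/j)$, which is strictly smaller than $\pi^2/6$. Second, your stated lower bound uses only the range $p\in(n/2,n]$ and yields leading coefficient $\tfrac{1}{2}\log 2$, which does not match $c_0$; if (i) is meant as a genuine asymptotic (the paper's statement is truncated and never says what the limit equals), the lower bound must also run over all levels $j$, with the same uniformity control you outline for the upper bound. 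None of this affects (ii) or the application in the paper, but if you intend to claim the sharp form of (i) you should fix both the constant and the lower-bound argument.
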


The next result is due to Babai and Pyber \cite{Babai-Pyber} and
appears as Theorem~4.3 in~\cite{Moreto}.

\begin{lemma}\label{orbit} Let $G$ be a permutation group on a set $\Omega$ of
cardinality $k$ and assume that $G$ does not contain any alternating
group larger than $A_n$ as a composition factor. Then the number of
orbits of $G$ on the power set $\mathcal{P}(\Omega)$ is at least
$a^{k/n}$ where $a$ is some constant.\hfill$\Box$
\end{lemma}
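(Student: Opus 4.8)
The plan is to prove the bound by induction on the degree $k=|\Omega|$, with the counting driven by the Cauchy--Frobenius (Burnside) formula. Writing $N(H,\Delta)$ for the number of orbits of a permutation group $H$ on $\mathcal{P}(\Delta)$, and $c(g)$ for the number of cycles of $g$ on $\Omega$, a subset is $g$-invariant precisely when it is a union of cycles of $g$, so that
$$ N(G,\Omega)=\frac{1}{|G|}\sum_{g\in G} 2^{c(g)}. $$
Keeping only the identity term gives the elementary baseline $N(G,\Omega)\ge 2^{k}/|G|$, which already settles every case in which $|G|$ is subexponential in $k$. The content of the lemma is therefore concentrated in the groups of large order, and the hypothesis on alternating composition factors is exactly what forces such groups to carry a rich imprimitivity (wreath) structure that I would exploit recursively.

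First I would dispose of the primitive base case. If $G$ is primitive of degree $k$, then by the CFSG-based order bounds for primitive groups (Cameron, Liebeck) either $|G|<k^{1+\log_2 k}$, in which case $2^{k}/|G|\ge 2^{\,k-(1+\log_2 k)\log_2 k}$ dominates $a^{k/n}$ once $k$ exceeds an absolute constant (the finitely many small $k$ being absorbed into the choice of $a$); or $G$ is one of the large primitive groups, whose socle is a product $T^{r}$ with $T$ an alternating group $A_{\ell}$ in product or diagonal action. In the latter case the hypothesis forces $\ell\le n$, which bounds the degree and the order in terms of $n$ and funnels the estimate into the wreath analysis below.

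For transitive imprimitive $G$ I would fix a nontrivial block system with $b$ blocks of size $s$ (so $k=bs$), let $\bar G\le S_b$ be the induced action on blocks and $L\le\Sym(s)$ the action inside a block. Writing $\mathcal{T}$ for the set of $L$-orbits on $\mathcal{P}$ of a single block, the assignment of its block-type to each block is $G$-equivariant, so $N(G,\Omega)\ge N(\bar G,\mathcal{T}^{\,b})$, the number of $\bar G$-orbits on colourings of the $b$ blocks by $|\mathcal{T}|=N(L,s)$ colours. Here $N(L,s)\ge a^{s/n}$ by the inductive hypothesis, since $L$ is a section of $G$ and hence has no alternating composition factor larger than $A_n$; the same applies to $\bar G$. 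I would bound the orbit count on colourings in two complementary ways: the multiset bound $N(\bar G,\mathcal{T}^{\,b})\ge N(S_b,\mathcal{T}^{\,b})=\binom{b+|\mathcal{T}|-1}{b}$, which is strong when the blocks are large, and the identity-term bound $N(\bar G,\mathcal{T}^{\,b})\ge |\mathcal{T}|^{\,b}/|\bar G|$ combined with the inductive control of $|\bar G|$, which is strong when the blocks are small and $\bar G$ is correspondingly constrained. The intransitive case is handled by realising $G$ as a subdirect product of its transitive constituents and analysing the diagonal identifications through Goursat's lemma, the composition-factor hypothesis again limiting how much two constituents can be glued.

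The step I expect to be the genuine obstacle is propagating a single absolute constant $a$ through the iterated wreath layers without degradation. The sharpest configurations are the iterated wreath products built from a fixed small group, for which the orbit count obeys a recursion of the form $f_d=\binom{f_{d-1}+c-1}{c}$ and grows like a fixed exponential $C^{k}$ in the degree; matching this against $a^{k/n}$ pins $a$ to a definite absolute constant greater than $1$. Consequently the multiset estimate must be used in its sharp form, and the lower-order losses at each level --- notably the gap between $\binom{b+c-1}{b}$ and $c^{\,b}$ --- have to be tracked and absorbed uniformly, so that the chosen $a$ survives every layer of the induction simultaneously.
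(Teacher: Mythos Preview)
The paper does not supply a proof of this lemma: the terminal $\Box$ marks it as a quoted result of Babai and Pyber, also recorded as Theorem~4.3 in Moret\'{o}'s paper, and it is invoked here purely as a black box. There is therefore no in-paper argument to compare your proposal against.

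For what it is worth, your outline does track the architecture of the original Babai--Pyber proof: the Cauchy--Frobenius baseline $N(G,\Omega)\ge 2^{k}/|G|$ handles groups of small order, primitive groups are controlled via order bounds, and imprimitive groups are treated inductively through a block system, with the intransitive case reduced to transitive constituents. You have also correctly located the real content, namely carrying a single absolute base $a>1$ through arbitrarily many wreath layers, and your iterated-wreath example is indeed the extremal configuration that pins down $a$. One point in your sketch is loose: in the ``large'' primitive case you say the hypothesis $\ell\le n$ bounds the degree and order in terms of $n$. In product action of $A_\ell\wr S_r$ the top permutation group on the $r$ factors is merely transitive, so $r$ is not automatically bounded by $n$ from the composition-factor hypothesis alone, and even when both $\ell,r\le n$ the degree $\binom{\ell}{t}^{r}$ can be exponential in $n$; such $k$ cannot be absorbed into an absolute constant $a$, so this case still needs a genuine orbit-count estimate (or an explicit recursion into the product-action structure) rather than being dismissed as a base case.
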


As mentioned earlier, Conjecture B has been proved for solvable
groups by Jaikin-Zapirain~\cite{Jaikin2}. We need this result in the
proof of Theorem~D.

\begin{theorem}[Zaikin-Zapirain \cite{Jaikin2}]\label{Jaikin-solvable} If the largest multiplicity of conjugacy class
sizes of a finite solvable group $G$ is $m$, then the order of $G$
is $m$-bounded.\hfill$\Box$
\end{theorem}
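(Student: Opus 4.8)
The plan is to convert the multiplicity hypothesis into a lower bound on the class number $k(G)$ and then to produce, inside a suitable abelian section of $G$, a large family of conjugacy classes all of the same size. First I would record the elementary but decisive reduction: every conjugacy class size of $G$ divides $|G|$, so the number of \emph{distinct} class sizes is at most $d(|G|)$, the number of divisors of $|G|$. Counting each size with its multiplicity gives $k(G)\le m\cdot d(|G|)$, that is, $m\ge k(G)/d(|G|)$. Since $d(N)=N^{o(1)}$, it therefore suffices to show that $k(G)/d(|G|)\to\infty$ as $|G|\to\infty$ along solvable groups; equivalently, that a solvable group of large order has a class number substantially exceeding the slowly growing quantity $d(|G|)$. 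As a first sanity check, the central elements are precisely the classes of size $1$, so the hypothesis already forces $|Z(G)|\le m$; the genuine work is the lower bound on $k(G)$.

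To bound $k(G)$ from below I would localize to an abelian section and count orbits. If $A\trianglelefteq G$ is abelian, then the $G$-class of $a\in A$ is exactly its orbit under conjugation and $|\cl_G(a)|=|G:C_G(a)|$ divides $|G|$; hence any collection of $t$ conjugation-orbits inside $A$ contains at least $t/d(|G|)$ orbits of one common size, i.e.\ forces multiplicity at least $t/d(|G|)$. So it is enough to exhibit a large abelian normal section on which $G$ has many orbits. I would take $A$ to be a chief factor inside the Fitting subgroup $F(G)$, so that $\bar G:=G/C_G(A)$ is a solvable irreducible subgroup of $\GL(A)$, with $A\cong\FP^{k}$. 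When the action of $\bar G$ is imprimitive, $\bar G$ permutes a block system of some size $\ell\ge 2$, yielding a homomorphism onto a solvable permutation group on $\ell$ points; Lemma~\ref{orbit} (with $n$ an absolute constant, since a solvable group has no nonabelian composition factor, in particular no alternating one beyond $A_4$) then produces at least $a^{\ell/n}$ orbits on subsets of the blocks, which I would realize as comparably many conjugation-orbits inside $A$ via indicator-type vectors supported on the chosen blocks. Combining this with the divisor reduction and Lemma~\ref{factorial}(ii) shows that the forced multiplicity tends to infinity once $\ell$ is large.

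Two points remain, and the difficulty concentrates there. First, the imprimitive argument helps only when the block number $\ell$ is large, so the primitive case, where $\bar G$ is a primitive solvable linear group on $A$, must be treated separately: here the P\'alfy--Wolf bounds control $|\bar G|$ polynomially in $|A|$, and I would argue that such an action nonetheless has many orbits on $A$, so that again some orbit size is shared by more than any prescribed number of classes once $|A|$ is large. Making this orbit count strong enough to outstrip $d(|G|)$ in the primitive case is the main obstacle. Second, one must bootstrap from a single large abelian section to the whole group: if every chief factor of $G$ is small, then $G$ has many chief factors, and I would run an induction along a chief series---using that passing to quotients can only lose classes ($k(G/N)\le k(G)$) and that the relevant multiplicity control survives on the sections analysed---to conclude that $|G|$ is bounded in terms of $m$. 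In every case the engine is the same: convert structural largeness of a solvable group into exponentially many equal-sized conjugation-orbits through Lemma~\ref{orbit}, and then defeat the slowly growing divisor count $d(|G|)$ through Lemma~\ref{factorial}.
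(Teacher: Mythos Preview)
The paper does not prove this theorem at all: the trailing $\Box$ after the statement signals that it is quoted as a black box from Jaikin--Zapirain's paper~\cite{Jaikin2}, and it is used only as an input to the proof of Theorem~D. So there is no ``paper's own proof'' to compare against; what you have written is an attempt to reprove an external result that the author simply cites.

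As for the attempt itself, your opening reduction is exactly the right move and is the same device used in Lemma~\ref{lietype}: since every class size divides $|G|$, one has $m\ge k(G)/d(|G|)$, so it would suffice to show that $k(G)/d(|G|)\to\infty$ along solvable groups. But from that point on you are essentially restating the theorem rather than proving it, and you correctly flag the two places where the argument is incomplete. Two concrete concerns deserve emphasis. First, in the primitive case the P\'alfy--Wolf bound gives $|\bar G|\le c\,|A|^{\alpha}$ with $\alpha>1$, so the trivial orbit estimate $|A|/|\bar G|$ is useless; getting enough equal-sized orbits here is genuinely delicate and is where the real content of Jaikin--Zapirain's argument lies. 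Second, your inductive bootstrap asserts that ``the relevant multiplicity control survives on the sections analysed,'' but this is precisely what is \emph{not} automatic: the hypothesis bounds multiplicities only in $G$, and there is no general inequality relating the largest multiplicity of class sizes in $G$ to that in a quotient $G/N$. Indeed, the gap between Conjecture~B and Theorem~D in this very paper is exactly the failure of that inheritance. So the chief-factor induction you sketch cannot be run without an additional idea that transfers information downward, and without it the argument does not close.
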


The following lemma is a nice property of the action of the
automorphism group of a simple group on its conjugacy classes which
might be of independent interest.

\begin{lemma}\label{simple-extension} Let $S$ be a finite non-abelian simple group.
Then there is a nontrivial conjugacy class of $S$ invariant under
the conjugate action of $\Aut(S)$.
\end{lemma}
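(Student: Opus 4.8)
The plan is to produce such a class by a counting argument based on orbit sizes of $\Aut(S)$ acting on $\Irr(S)$ versus on the set of conjugacy classes. First I would recall the standard fact that for any finite group $S$, the permutation action of $\Aut(S)$ on $\Irr(S)$ and on the set of conjugacy classes of $S$ have the same number of orbits; this is a Brauer-type permutation lemma, applied here to the outer automorphism group $\Out(S)=\Aut(S)/\Inn(S)$ acting on both sets (since $\Inn(S)$ acts trivially on classes and on $\Irr(S)$). In particular, the number of $\Aut(S)$-fixed irreducible characters equals the number of $\Aut(S)$-fixed conjugacy classes. The trivial character is always $\Aut(S)$-fixed, so it would suffice to exhibit a second $\Aut(S)$-fixed irreducible character, i.e. to show that $S$ has at least two $\Aut(S)$-invariant irreducible characters; then the class-side count gives at least two $\Aut(S)$-invariant classes, one of which must be nontrivial.

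Next I would argue that a non-abelian simple group always has a nontrivial $\Aut(S)$-invariant irreducible character. The cleanest route is probably via the classification: for alternating groups $A_n$ with $n\ge 5$, the two irreducible constituents of the permutation character on $n$ points are swapped or fixed by $\Out(A_n)$ in a controlled way, but in any case $A_n$ (indeed $S_n$) has $\Aut$-invariant characters such as the one of degree $n-1$ for $n\ne 6$, and $A_6$ can be checked by hand; for groups of Lie type the Steinberg character $\mathrm{St}$ has degree equal to the order of a Sylow $p$-subgroup and is invariant under all automorphisms (diagonal, field, and graph), giving a canonical nontrivial $\Aut(S)$-fixed character; and the finitely many sporadic groups are checked directly from their character tables (each has an $\Aut$-stable nontrivial irreducible character since $|\Out|\le 2$ and one can always find a rational-valued or otherwise stable character of small degree). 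Combining these cases, every non-abelian simple $S$ has at least two $\Aut(S)$-invariant irreducible characters.

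Then the conclusion follows immediately: by the permutation-equivalence of the two actions, $S$ has at least two $\Aut(S)$-invariant conjugacy classes, and discarding the class of the identity leaves a nontrivial $\Aut(S)$-invariant class, as required.

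The main obstacle I expect is making the intermediate claim — that $S$ has a nontrivial $\Aut(S)$-invariant irreducible character — clean and uniform. One would like to avoid a full case-by-case trawl through the classification; an attractive shortcut is to use the Steinberg character for Lie type and a uniform statement for alternating and sporadic groups, or alternatively to invoke the known fact that $\Out(S)$ is solvable (Schreier's conjecture, a consequence of the classification) together with a fixed-point estimate — if $\Out(S)$ acts on the set of $k(S)-1$ nontrivial classes with no fixed point, then every orbit has size dividing $|\Out(S)|$ and is $\ge 2$, forcing $|\Out(S)|$ to be even and $k(S)-1$ even, which one can try to contradict using lower bounds on $k(S)$ relative to $|\Out(S)|$; but pushing this through uniformly is delicate, so the Brauer-lemma-plus-Steinberg argument is the safer plan.
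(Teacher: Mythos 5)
The key step of your argument---that the number of $\Aut(S)$-fixed irreducible characters equals the number of $\Aut(S)$-fixed conjugacy classes---does not follow from Brauer's permutation lemma, and this is a genuine gap. Brauer's lemma is an element-wise statement: for each automorphism $a$, the number of $a$-fixed irreducible characters equals the number of $a$-fixed classes. Summing over $\Out(S)$ (inner automorphisms act trivially on both sets), this says the two permutation characters of $\Out(S)$, on $\Irr(S)$ and on the set of classes, coincide, and hence by Burnside's orbit-counting that the two actions have the same number of \emph{orbits}---which is all you can extract in general. The number of points fixed by the whole group is the number of orbits of length one, and this is \emph{not} determined by the permutation character unless the acting group is cyclic: already for $A=C_2\times C_2$ with its three subgroups $H_1,H_2,H_3$ of index $2$, the $A$-sets $A/H_1\sqcup A/H_2\sqcup A/H_3$ and $A\sqcup\{\ast\}\sqcup\{\ast\}$ have equal permutation characters but $0$ and $2$ fixed points respectively. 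Since $\Out(S)$ is frequently non-cyclic for groups of Lie type (e.g.\ $\mathrm{PSL}_n(q)$, or $\mathrm{PSL}_2(9)\cong A_6$ with $\Out\cong C_2\times C_2$), the invariance of the Steinberg character does not by itself yield an invariant conjugacy class. Your fallback parity/counting idea on the $k(S)-1$ nontrivial classes is, as you suspect, not something that can be pushed through uniformly either.

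This is precisely the point the paper is careful about: it runs your Steinberg-character argument \emph{only} for the Suzuki and Ree groups, where $\Out(S)$ is cyclic (generated by field automorphisms), so that Brauer's lemma really does give a permutation isomorphism between the two actions. Everywhere else it produces an invariant class directly: for alternating and sporadic groups, a suitable class of involutions; for groups of Lie type in even characteristic (other than Suzuki, Ree and $F_4(2^n)$), the unique class of long root elements, whose class of root subgroups is $\Aut(S)$-stable; for $F_4(2^n)$, its unique involution class with centralizer of order $q^{24}(q^2-1)(q^4-1)$, read off from Shinoda's classification; and in odd characteristic, an involution class singled out by the isomorphism type of its centralizer via Table~4.5.1 of Gorenstein--Lyons--Solomon. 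Your approach would be salvageable where $\Out(S)$ is cyclic (alternating groups with $n\neq 6$, sporadic groups, Suzuki and Ree groups), but for the remaining Lie-type groups you must either find an invariant class explicitly, as the paper does, or supply a genuinely different argument.
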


\begin{proof} Checking the Atlas \cite{Atl1} case by case, we see
that every conjugacy class of involutions in a simple sporadic group
or the Tits group is invariant under the automorphism group. It is
easy to see that the same conclusion also holds for alternating
groups. So we can assume that $S$ is a finite simple group of Lie
type.

\medskip

1) First we prove the lemma for Suzuki and Ree groups. In this case,
$\Out(S)$ is a cyclic group of field automorphisms of $S$.
Therefore, the action of $\Out(S)$ on conjugacy classes is
permutation isomorphic to its action on irreducible characters of
$S$. Since the \emph{Steinberg character} of $S$ is invariant under
$\Out(S)$, there must be a nontrivial conjugacy class of $S$
invariant under $\Out(S)$.

\medskip

2) Next we assume that $S$ is $F_4(2^n)=F_4(q)$. The conjugacy
classes of $S$ is described in \cite{Shinoda}. Inspecting the list
of class representatives and the orders of their centralizers in
\cite[Theorem~2.1]{Shinoda}, we see that $S$ has a unique conjugacy
classes of involutions with order of its representative centralizer
$q^{24}(q^2-1)(q^4-1)$. This class is therefore invariant under
$\Aut(S)$.

\medskip

3) Now we handle the case where $S$ is a finite simple group of Lie
type in even characteristic but not the ones considered in 1) and
2). Note that $B_n(2^m\simeq C_n(2^m)$, we can assume that $S$ is
not of type $B_n$ in even characteristic. As claimed in~\cite[page
103]{Gorenstein}, the class of \emph{long root subgroups} of $S$ is
invariant under $\Aut(S)$ and all \emph{long root elements}, which
are nonidentity elements of long root subgroups, are $S$-conjugate.
This means that the unique conjugacy class of long root elements of
$S$ is invariant under $\Aut(S)$.

\medskip

4) Finally, we assume that $S$ is a finite group of Lie type in odd
characteristic. Table~4.5.1 of~\cite{Gorenstein} describes all
conjugacy classes of involutions of $S$ including the structures of
centralizers of class representatives. In general, these
centralizers is ``close'' (cf. the interpretation of
\cite[Table~4.5.1]{Gorenstein}) to a central product of finite
groups of Lie type. For each type of $S$, we choose a conjugacy
class so that the structure of corresponding centralizer is
different from that of other centralizers. This class is therefore
invariant under $\Aut(S)$. The conjugacy class with representative
$t_1$ or $t_1'$ will work in all cases except $S=\tb D_4(q)$ where
we choose the class with representative $t_2$.
\end{proof}

\begin{proof}[Proof of Theorem~\textup{D}] The proof is divided into the following
steps:

\medskip

1) If $A_n$ is a composition factor of $G$, then $n$ is $m$-bounded.

\noindent\emph{Proof.} Suppose that $N/M$ is a chief factor of $G$
that is a direct product of copies of $A_n$. Then $N/M$ is also a
chief factor of $G/M$. Since $G/M$ satisfies the hypothesis of the
theorem, we can assume that $M=1$. The subgroup $N$ is then a
minimal normal subgroup of $G$ and we assume that $N$ is the direct
product of $k$ copies of $A_n$. The quotient $H=G/C_G(N)$ is
embedded in $\Aut(N)=\Aut(S)\wr S_k$. Note that we can consider $N$
as a subgroup of $H$.

Assume the contrary that $n$ is not $m$-bounded.
Lemma~\ref{alternating} then implies that $A_n$ can have arbitrarily
many conjugacy classes of the same size. Let's pick $n$ big enough
so that $A_n$ has $3m+1$ conjugacy classes $K_1, K_2,...,K_{3m+1}$
all of size $s$. We then obtain $3m+1$ corresponding conjugacy
classes of $N$ of size $s^k$: $K'_i=K_i\times
K_i\times\cdot\cdot\cdot\times K_i$, $i=1,2,...,3m+1$. Since the
orbit of $K'_i$ under the action of $\Aut(N)=\Aut(S)\wr S_k$
consists of $1$ or $2$ conjugacy classes (of size $s^k$), the same
thing happens to $H$ since $H\leq \Aut(N)$. If there are $m+1$ of
$K'_i$s fixed under $H$, then $H$ has $m+1$ conjugacy classes of
size $s^k$, which is a contradiction. If there are at most $m$ of
$K'_i$s fixed under $H$, then we would have at least $2m+1$ of
$K'_i$s whose orbits under $H$ all have two conjugacy classes of
size $s^k$. It follows that $H$ has at least $m+1$ conjugacy classes
of size $2s^k$, a contradiction again.

\medskip

2) Let $O_\infty(G)$ denote the maximal normal solvable subgroup of
$G$. We claim that if $|G:O_\infty(G)|$ is $m$-bounded then $|G|$ is
$m$-bounded. Let $x$ be an arbitrary element in $O_\infty(G)$. Then
$$\frac{|\cl_G(x)|}{|\cl_{O_\infty(G)}(x)|}=\frac{|G|}{|O_\infty(G)|}\cdot\frac{|C_{O_\infty(G)}(x)|}{|C_G(x)|},$$
which divides $|G:O_\infty(G)|$. Since multiplicity of every
conjugacy class size of $G$ is less than or equal to $m$ and
$|G:O_\infty(G)$ is $m$-bounded, the multiplicity of any conjugacy
class size of $O_\infty(G)$ is $m$-bounded. Using
Theorem~\ref{Jaikin-solvable}, we see that $|O_\infty(G)|$ is
$m$-bounded and therefore $|G|=|O_\infty(G)||G:O_\infty(G)|$ is
$m$-bounded.

\medskip

3) Now we show that $|G:O_\infty(G)|$ is $m$-bounded. Since any
quotient of $G/O_\infty(G)$ is a quotient of $G$, the group
$G/O_\infty(G)$ satisfies the hypothesis of the theorem. Therefore,
we may assume that $O_\infty(G)=1$. Let $\Soc(G)$ denote the socle
of $G$. In other words, $\Soc(G)$ is the direct product of minimal
normal subgroups of $G$, each of which is a direct product of
nonabelian simple groups. The group $G$ now is embedded into
$\Aut(\Soc(G))$. Therefore, it suffices to show that $|\Soc(G)|$ is
$m$-bounded. This is done by the following two steps.

\medskip

4) If $S$ is a direct factor of $\Soc(G)$, then the number of times
of $S$ appearing in $\Soc(G)$ is $m$-bounded.

\noindent\emph{Proof.} Suppose that the number of times of $S$ in
$\Soc(G)$ is $k$. Let $N$ be the direct product of $k$ copies of
$S$. Then $N$ is a normal subgroup of $G$. As in step~1,
$H=G/C_G(N)$ is embedded in $\Aut(N)=\Aut(S)\wr S_k$. Set $K:=H\cap
\Aut(S)^k$. Note that $N$ can be viewed as a subgroup of $H$ and
therefore $N$ is a subgroup of $K$. Also, $H/K$ is a permutation
group on $k$ letters. By step 1, there is an integer $n$ bounded in
terms of $m$ such that $H/K$ does not contain any alternating group
larger than $A_n$. Therefore, by Lemma~\ref{orbit}, the number of
orbits of $H/K$ on the power set of $k$ letters is at least
$a^{k/n}$ where $a>1$ is some constant. It follows that there exists
$l\in\{0,1,2,...,k\}$ such that the number of orbits of $H/K$ on the
subsets of cardinality $l$ is at least $a^{k/n}/(k+1)$.

Following Lemma~\ref{simple-extension}, let $C$ be a nontrivial
conjugacy class of size $s$ of $S$ invariant under $\Aut(S)$. Then
$C$ is also a conjugacy class of $\Aut(S)$. Consider ${k\choose l}$
conjugacy classes of $K$ which are products of $l$ copies of $C$ and
$k-l$ copies of the trivial conjugacy class. From the conclusion of
the previous paragraph, we obtain that there are at least
$a^{k/n}/(k+1)$ $H$-orbits of conjugacy classes of $K$ of size
$s^l$. Therefore, $H$ has at least $a^{k/n}/(k+1)d(k!)$ conjugacy
classes of the same size. By Lemma~\ref{factorial}, $k$ must be
bounded in terms of $m$.

\medskip

5) If $S$ is a direct factor of $\Soc(G)$, then $|S|$ is
$m$-bounded.

\noindent\emph{Proof.} Define $k, N, H$, and $K$ as in step 4. We
then have that $H/K$ is a permutation group on $k$ letters and
therefore $H/K$ is $m$-bounded. It follows that the largest
multiplicity of conjugacy class sizes of $K/N$ is bounded in terms
of $m$ since $H/N=G/NC_G(N)$ does not have more than $m$ conjugacy
classes of the same size. Note that $K/N$ is isomorphic to a
subgroup of $\Out(S)$, which is solvable by the Schreier's
conjecture. So $K/N$ is solvable and hence its order is $m$-bounded
by Theorem~\ref{Jaikin-solvable}. We then have that $|H:N|$ is
$m$-bounded.

Recall that the largest multiplicity of conjugacy class sizes of $H$
is not more than $m$. It follows that the largest multiplicity of
conjugacy class sizes of $N$ is bounded in terms of $m$ and
therefore the same thing happens to $S$. Using Theorem C, we obtain
that $|S|$ is $m$-bounded.
\end{proof}


\section*{Acknowledgement} I am grateful to Stefan Forcey for
various discussion leading to the proof of Lemma~\ref{alternating}.



\end{document}